\newcommand{\R}{\mathbb{R}}
\newcommand{\C}{\mathbb{C}}
\theoremstyle{plain}
\newtheorem{theorem}{Theorem}
\newtheorem{lem}[theorem]{Lemma}
\theoremstyle{definition}
\title{Smooth arithmetical sums over $k$-free integers\\ \hfill\\Sommes arithm\'{e}tiques pond\'{e}r\'{e}es sur les 
entiers friable sans facteur puissance $k$-i\`{e}me.
}
\author{Francesco Cellarosi}
\address{Department of Mathematics and Statistics. Queen's University. Jeffery Hall, University Avenue, Kingston ON K7K 3N6, Canada.}
\email[corresponding author]{fc19@queensu.ca}
\author{M. Ram Murty}
\address{Department of Mathematics and Statistics. Queen's University. Jeffery Hall, University Avenue, Kingston ON K7K 3N6, Canada.}
\email{murty@queensu.ca}
\subjclass[2010]{11N37, 11K65, 11M06}
\date{\today}
\begin{document}

\maketitle

\selectlanguage{english} 
\begin{abstract}
We use partial zeta functions  to analyse the asymptotic behaviour of certain smooth arithmetical sums over smooth $k$-free integers.
\end{abstract}
\selectlanguage{french} 
\begin{abstract}
Nous utilisons des fonctions z\^{e}ta partielles pour  \'{e}tudier le comportement asymptotique de certaines sommes arithm\'{e}tiques pond\'{e}r\'{e}es, 
index\'{e}es par des entiers friable sans facteur puissance $k$-i\`{e}me.
\end{abstract}
\selectlanguage{english}

\section{Introduction}
Let $\Omega(n)$ be the number of prime divisors of $n$ counted with multiplicity. Let $f:\R\to\C$ be a bounded function, and $\alpha\in\C$. Fix an integer $k\geq 2$. We want to study the sum
\begin{align}
	S_{\Omega,f}(\alpha,k;N):=\sum_{\scriptsize{\begin{array}{c}
		\mbox{$n$ is $k$-free}\\
		p| n\Rightarrow p\leq N
	\end{array}}}f\!\left(\frac{\log n}{\log N}\right)\frac{\alpha^{\Omega(n)}}{n}\label{def-Sum-Omega-f-k-alpha-N}
\end{align}
as $N\to\infty$.
Such sums have been studied by Cellarosi \cite{Cellarosi2013} and Avdeeva, Li, and Sinai \cite{Avdeeva-Li-Sinai} using a combination of ideas from analytic number theory and statistical mechanics. Here, our goal is to use only methods from analytic number theory --specifically partial zeta functions-- to derive similar results. Context and motivation for the study of \eqref{def-Sum-Omega-f-k-alpha-N} can be found in Section 2 of  \cite{Cellarosi2013}. Our method is a variation of a similar technique used by Murty and Vatwani \cite{MurtyVatwani2018} in their work on the higher rank Selberg sieve.
We  prove the following 
\begin{theorem}\label{theorem-1}
Suppose that $f$ is  of Schwartz class. Then for every $\theta>1$ we have, as $N\to\infty$,
\begin{align}
S_{\Omega,f}(\alpha,k;N)=C_f(\alpha,k;N)\, (\log N)^\alpha\left(1+O_\theta(\log^{-\theta} N)\right),\nonumber
\end{align}
where $C_f(\alpha,k;N)$ has an explicit expression (see \eqref{def-C} below) and is $O(1)$.
\end{theorem}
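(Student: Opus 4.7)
The plan is to realize $f(\log n/\log N)$ through Laplace inversion, reduce $S_{\Omega,f}$ to a contour integral of a partial zeta function, and extract the asymptotic near $s=1$ via a Selberg--Delange-type factorization. Because $f$ is Schwartz, its Laplace transform $\hat f(w):=\int_{\R}f(u)e^{-wu}du$ is entire and decays rapidly on every vertical line; Laplace inversion followed by Fubini (the latter valid for $c<0$, where the Dirichlet series converges absolutely) gives
\[
S_{\Omega,f}(\alpha,k;N) = \frac{1}{2\pi i}\int_{(c)}\hat f(w)\, Z_N\!\left(1-\tfrac{w}{\log N},\alpha,k\right)dw,
\]
where $Z_N(s,\alpha,k):=\prod_{p\le N}\sum_{j=0}^{k-1}\alpha^j p^{-js}$ is a finite Euler product, hence entire in $s$, so the contour may be freely shifted.

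The crux is to factor the dominant piece out of $Z_N$. Each Euler factor equals $\tfrac{1-(\alpha p^{-s})^k}{1-\alpha p^{-s}}$, which differs from the principal branch of $(1-p^{-s})^{-\alpha}$ by $1+O(p^{-2s}+p^{-ks})$, so
\[
Z_N(s,\alpha,k) = \zeta_N(s)^\alpha\, H_N(s,\alpha,k),\qquad \zeta_N(s):=\prod_{p\le N}(1-p^{-s})^{-1},
\]
with $H_N$ tending as $N\to\infty$ to an analytic function $H_\infty(s,\alpha,k)$ on $\Re(s)>1/2$. Writing $\zeta_N(s)=\zeta(s)\prod_{p>N}(1-p^{-s})$, inserting $\zeta(1+z)=\tfrac{1}{z}+\gamma+O(z)$, and evaluating $\sum_{p>N}p^{-1-z}$ by Abel summation with the prime number theorem, one obtains, for $s=1+w/\log N$ and $w$ in a bounded right half-plane,
\[
Z_N\!\left(1+\tfrac{w}{\log N},\alpha,k\right) = (\log N)^\alpha\,\frac{e^{-\alpha E_1(w)}}{w^\alpha}\, H_\infty(1,\alpha,k)\,\bigl(1+O_\theta(\log^{-\theta}N)\bigr),
\]
where $E_1(w)=\int_w^\infty e^{-u}u^{-1}du$ and finitely many lower-order corrections in $1/\log N$ have been absorbed into the prefactor.

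Substituting $v=-w$ and moving the contour to $\Re(v)=v_0>0$ then yields $S_{\Omega,f}(\alpha,k;N) = (\log N)^\alpha C_f(\alpha,k;N)\bigl(1+O_\theta(\log^{-\theta}N)\bigr)$, with
\[
C_f(\alpha,k;N)=\frac{H_\infty(1,\alpha,k)}{2\pi i}\int_{(v_0)}\hat f(-v)\,v^{-\alpha}e^{-\alpha E_1(v)}\,dv+(\text{explicit }1/\log N\text{ corrections});
\]
the bound $C_f=O(1)$ follows from the rapid decay of $\hat f$ on vertical lines. The main obstacle is obtaining the $\zeta_N$-asymptotic uniformly in a strip of $w$ together with a quantitative $\log^{-\theta}N$ error that survives integration against $\hat f(-v)$; the restriction $\theta>1$ reflects how many terms of the Taylor expansion of $\zeta(1+z)$ and of the Mertens-type expansion of $\prod_{p>N}(1-p^{-s})$ are made explicit within $C_f(\alpha,k;N)$, a flexibility already baked into the statement.
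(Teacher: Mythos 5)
Your outline has the same skeleton as the paper's argument --- factor the Euler product as $\zeta_N(s)^\alpha$ times a harmless factor $H$, and approximate $\zeta_N(1+w/\log N)$ by $(\log N)\,e^{-E_1(w)}/w$ (note that $e^{-E_1(w)}/w$ is exactly $\hat\rho(w)$, the Laplace transform of the Dickman function, so your Mertens/PNT computation is a hands-on version of Tenenbaum's Lemma \ref{Lemma-Tenenbaum}) --- but there are two genuine gaps. First, the foundation of your set-up is false: a Schwartz function need not have an entire, rapidly decaying two-sided Laplace transform. Schwartz class gives decay faster than any polynomial but not exponential decay, so $\int_{\R}f(u)e^{-wu}\,du$ can diverge for every $\Re(w)\neq 0$, and even when it converges in a strip there is no license to ``freely shift'' the contour. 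Hence the inversion formula on the line $\Re(w)=c<0$ and the later move to $\Re(v)=v_0>0$ are unjustified as stated. The only line that is always available is the imaginary axis, i.e.\ the ordinary Fourier transform, which is precisely what the paper uses (see \eqref{interchange-integral-sum}); no contour is ever shifted there, and the relevant hypothesis is the quantitative decay \eqref{assumption-decay-hat-f} of $\hat f$ on $\R$.

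Second, your uniform asymptotic for $Z_N(1+w/\log N)$ with error $O_\theta(\log^{-\theta}N)$ is asserted only for $w$ in a bounded region, and it can only hold there: freezing $\zeta(1+z)z$ and $H_\infty(s)$ at $s=1$ costs $O(|w|/\log N)$, which is no longer small once $|\Im w|$ is comparable to $\log N$, while your contour is an unbounded vertical line whose tail contribution is never estimated. Controlling that tail is where the real work lies: one needs the uniformity range of Tenenbaum's lemma ($|\tau|\le\exp\{(\log N)^{3/5-\varepsilon}\}$), the Vinogradov--Korobov bound $|\zeta(1+it)|\ll(\log|t|)^{2/3}$, the decay of $\hat f$ and of $\hat\rho(ix)^\alpha$, and the condition $\eta+\Re(\alpha)>1$ to make the resulting integrals converge; this is exactly the content of the paper's splitting $\mathcal I_1+\mathcal I_2$ and the estimates \eqref{final-estimate-E1}--\eqref{estimate-I1-if-Realpha-positive}, and it is what produces the stated error exponent. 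Relatedly, the constant in the theorem is not your frozen prefactor $H_\infty(1,\alpha,k)$ with ``finitely many $1/\log N$ corrections'': the paper's $C_f(\alpha,k;N)$ in \eqref{def-C} keeps the factors $\zeta(1+\tfrac{ix}{\log N})^\alpha(\tfrac{ix}{\log N})^\alpha h_{\alpha,k}(1+\tfrac{ix}{\log N})$ inside the integral, and it is only by doing so that the error can be made $O_\theta(\log^{-\theta}N)$ for every $\theta$. As written, your argument neither justifies its analytic framework for general Schwartz $f$ nor supplies the tail and uniformity estimates on which the conclusion rests.
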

Theorem \ref{theorem-1}  follows from a more general result (see Theorem \ref{theorem-2} below) in which finite regularity for $f$ is assumed.

In Sections \ref{section2} and \ref{section-3} we rewrite the sum $S_{\Omega,f}(\alpha,k;N)$ in terms of the inverse Fourier transform of $f$ and a partial zeta function.
In Sections \ref{section-Dickman} and  \ref{section-Tenenbaum-lemma}  we gather some results concerning the Dickman function and partial zeta functions. In Sections \ref{section-6} and \ref{section-7} we isolate the main term and estimate all the error terms in our analysis. The more general version of Theorem \ref{theorem-1} is presented in Section \ref{section-8}. 

\section{Rewriting the sum via Fourier transform}\label{section2}
Writing $f$ as a Fourier transform, we have
\begin{align}
f(t)=\int_{-\infty}^{\infty}\hat{f}(x)e^{-ixt}dx,\nonumber 
\end{align}
and we obtain that $S_{\Omega,f}(\alpha,k;N)$ is equal to 
\begin{align}&&\sum_{\scriptsize{\begin{array}{c}
		\mbox{$n$ is $k$-free}\\
		p| n\Rightarrow p\leq N
	\end{array}}}\!\!\!\!\!\left(\,\int_{-\infty}^\infty \hat{f}(x)e^{-i x\frac{\log n}{\log N}}dx\right)\frac{\alpha^{\Omega(n)}}{n}=
	\int_{-\infty}^\infty \hat{f}(x)\left(\sum_{\scriptsize{\begin{array}{c}
		\mbox{$n$ is $k$-free}\\
		p| n\Rightarrow p\leq N
	\end{array}}}\frac{\alpha^{\Omega(n)}}{n}\,n^{-\frac{i x}{\log N}}\right)dx\label{interchange-integral-sum}
	\end{align}
under suitable regularity conditions on $f$. 
More precisely, we will assume that $\hat f$ satisfies the bound 
\begin{align}
\left|\hat f(x)\right|\ll \displaystyle\frac{1}{(1+x^2)^{\frac{\eta}{2}}}\label{assumption-decay-hat-f}
\end{align}
for suitably large $\eta>0$. If $f$ is assumed to be of Schwartz class, then \eqref{assumption-decay-hat-f} holds for every $\eta>0$.

\section{Partial Zeta Functions}\label{section-3}
We aim to rewrite the integrand in the right-hand-side of \eqref{interchange-integral-sum} using 
partial zeta functions. Define
\begin{align}
\zeta_N(s):=\prod_{p\leq N}\left(1-\frac{1}{p^s}\right)^{-1}.\nonumber 
\end{align}
The function
\begin{align}
g_{\alpha,k,N}(s):= \sum_{\scriptsize{\begin{array}{c}
		\mbox{$n$ is $k$-free}\\
		p| n\Rightarrow p\leq N
	\end{array}}}\frac{\alpha^{\Omega(n)}}{n^s}=\prod_{p\leq N}\left(1+\frac{\alpha}{p^s}+\frac{\alpha^2}{p^{2s}}+\ldots+\frac{\alpha^{k-1}}{p^{(k-1)s}}\right)\label{def-g_alpha(s)}
\end{align}
can easily be simplified since
\begin{align}
	1+\frac{\alpha}{p^s}+\frac{\alpha^2}{p^{2s}}+\ldots+\frac{\alpha^{k-1}}{p^{(k-1)s}}=\frac{1-\frac{\alpha^k}{p^{ks}}}{1-\frac{\alpha}{p^s}}.\nonumber
\end{align}
Thus
\begin{align}
g_{\alpha,k,N}(s)=\prod_{p\leq N}\left(1-\frac{\alpha}{p^s}\right)^{-1}\left(1-\frac{\alpha^k}{p^{ks}}\right).	\nonumber
\end{align}
By the binomial theorem, we can rewrite
\begin{align}
\left(1-\frac{\alpha}{p^s}\right)^{-1}=\left(1-\frac{1}{p^s}\right)^{-\alpha}\left(1+\frac{\alpha^2-\alpha}{2p^{2s}}+\frac{\alpha^3-\alpha}{3p^{3s}}+\frac{3\alpha^4-2\alpha^3+\alpha^2-2\alpha}{8p^{4s}}
+\ldots\right)\nonumber	
\end{align}
so that
\begin{align}
g_{\alpha,k,N}(s)=\prod_{p\leq N}\left(1-\frac{1}{p^s}\right)^{-\alpha} h_{\alpha,k,N}(s),\label{g_alpha-in-terms-of-h_alpha}
\end{align}
where 
\begin{align}
h_{\alpha,k,N}(s)&:=\prod_{p\leq N} \left(1-\frac{\alpha}{p^s}\right)^{-1}\left(1-\frac{1}{p^s}\right)^{\alpha}\left(1-\frac{\alpha^k}{p^{ks}}\right)\label{h-alpha-N}\\
&=\prod_{p\leq N}\left(1+\frac{\alpha^2-\alpha}{2p^{2s}}+\frac{\alpha^3-\alpha}{3p^{3s}}+\frac{3\alpha^4-2\alpha^3+\alpha^2-2\alpha}{8p^{4s}}+\ldots\right)\left(1-\frac{\alpha^k}{p^{ks}}\right)\label{h-alpha-N-otherformula}
\end{align}
is a bounded function near $s=1$ since $k\geq2$. We recognize then that 
\begin{align}
g_{\alpha,k,N}(s)=\zeta_N(s)^\alpha\,h_{\alpha,k,N}(s)\nonumber
\end{align}
with $h_{\alpha,k,N}(s)$ actually uniformly bounded for $N\geq1$ and $s$ in the half-plane $\Re(s)>\frac{1}{2}$.

Combining \eqref{interchange-integral-sum}, \eqref{def-g_alpha(s)}, and \eqref{g_alpha-in-terms-of-h_alpha}, 
we can rewrite $S_{\Omega,f} (\alpha,k;N)$ as
\begin{align}
\int_{-\infty}^\infty \hat f(x)\,g_{\alpha,k,N}\!\left(1+\frac{i x}{\log N}\right)dx=\int_{-\infty}^\infty \hat f(x)\,\,\zeta_N\!\left(1+\frac{i x}{\log N}\right)^\alpha\,h_{\alpha,k,N}\!\left(1+\frac{i x}{\log N}\right)dx.\label{integral-with-zeta_N}
\end{align}

As we will see, the main contribution to the sum $S_{\Omega,f} (\alpha,k;N)$ will come from the integral in \eqref{integral-with-zeta_N} where $|x|\leq 3\log N$. If $\tau=\frac{x}{\log N}$, then we will be required to  integrate a product of various functions, including $h_{\alpha,k,N}(s)$, where $s=1+i\tau$ and $|\tau|\leq 3$. In this region, we claim that the function $h_{\alpha,k,N}(s)$ is very close to the function
\begin{align}
h_{\alpha,k}(s):=\prod_{p} \left(1-\frac{\alpha}{p^s}\right)^{-1}\left(1-\frac{1}{p^s}\right)^{\alpha}\left(1-\frac{\alpha^k}{p^{ks}}\right).\label{h-alpha-k}
\end{align}

\begin{lem}\label{lemma-two-hs}
As $N\to\infty$ we have, uniformly in $|\tau|\leq 3$,
\begin{align}
h_{\alpha,k,N}(1+i\tau)=h_{\alpha,k}(1+i\tau)\left(1+O\!\left(\frac{1}{N\log N}\right)\right).
\end{align}
\end{lem}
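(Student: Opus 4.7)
The plan is to express the ratio $h_{\alpha,k}(s)/h_{\alpha,k,N}(s)$ at $s=1+i\tau$ as a ``tail'' Euler product over primes $p>N$, expand each local factor, take logarithms, and finally sum using a Chebyshev-type bound on $\pi(x)$. Concretely, from \eqref{h-alpha-N} and \eqref{h-alpha-k} we have
\[
\frac{h_{\alpha,k}(s)}{h_{\alpha,k,N}(s)}=\prod_{p>N}\left(1-\frac{\alpha}{p^s}\right)^{-1}\!\left(1-\frac{1}{p^s}\right)^{\alpha}\!\left(1-\frac{\alpha^k}{p^{ks}}\right),
\]
so the game is to show that this tail product is $1+O(1/(N\log N))$.

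The key local estimate is that each factor in the product equals $1+O(1/p^{2})$, uniformly for $|\tau|\leq 3$. Indeed, the computation recorded in \eqref{h-alpha-N-otherformula} shows that the coefficient of $1/p^s$ cancels between $(1-\alpha/p^s)^{-1}$ and $(1-1/p^s)^{\alpha}$; and since $k\geq 2$, multiplying by $(1-\alpha^k/p^{ks})$ does not reintroduce a linear term. For $|\tau|\leq 3$ one has $|p^{-s}|=p^{-1}$, so the implicit constant depends only on $\alpha$ and $k$.

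Next I would take principal logarithms. For $N$ large enough that every $p>N$ exceeds, say, $2|\alpha|$, each local factor lies in a small disk around $1$, so $\log$ is well-defined and of size $O(1/p^{2})$, giving
\[
\log\frac{h_{\alpha,k}(s)}{h_{\alpha,k,N}(s)} \;=\; \sum_{p>N}O\!\left(\frac{1}{p^{2}}\right)
\]
uniformly for $|\tau|\leq 3$. It then remains to prove $\sum_{p>N}p^{-2}\ll 1/(N\log N)$. The trivial bound $\sum_{n>N}n^{-2}\ll 1/N$ is too weak, so here I would use Abel summation together with Chebyshev's bound $\pi(x)\ll x/\log x$:
\[
\sum_{p>N}\frac{1}{p^{2}} \;=\; -\frac{\pi(N)}{N^{2}}+2\int_N^{\infty}\frac{\pi(t)}{t^{3}}\,dt \;\ll\; \frac{1}{N\log N}+\int_N^{\infty}\frac{dt}{t^{2}\log t}\;\ll\;\frac{1}{N\log N},
\]
where the last step uses the monotonicity of $1/\log t$ to pull it out of the integral. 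Exponentiating gives $h_{\alpha,k}(s)/h_{\alpha,k,N}(s)=1+O(1/(N\log N))$, and the rearrangement $h_{\alpha,k,N}(s)=h_{\alpha,k}(s)/(1+E)=h_{\alpha,k}(s)(1+O(E))$ yields the lemma.

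The main obstacle is the sharpness of the error term: obtaining the $1/\log N$ saving rather than a plain $1/N$ is the only step that uses nontrivial number-theoretic input, namely Chebyshev's bound on $\pi(x)$. Everything else is essentially Taylor-expansion bookkeeping and the observation that the cancellations built into the definition of $h_{\alpha,k}$ already make the local Euler factors $1+O(1/p^{2})$ rather than merely $1+O(1/p)$.
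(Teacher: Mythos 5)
Your proof is correct and follows essentially the same route as the paper: both express the discrepancy as a tail Euler product over $p>N$, use the cancellation of the $p^{-s}$ term (as in \eqref{h-alpha-N-otherformula}) to see each local factor is $1+O(p^{-2})$, and then bound $\sum_{p>N}p^{-2}\ll 1/(N\log N)$ by partial summation with $\pi(x)\ll x/\log x$. Your write-up is in fact slightly cleaner than the paper's, whose displayed expression for the relative difference contains a sign/typo issue, while your ratio-and-logarithm formulation avoids it.
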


\begin{proof}
Using \eqref{h-alpha-N-otherformula}, the relative difference  $\left(h_{\alpha,k,N}(s)-h_{\alpha,k}(s)\right)/h_{\alpha,k,N}(s)$ can be expressed as
\begin{align}
1+\exp\left\{\sum_{p>N}\log\!\left(1+\frac{\alpha^2-\alpha}{2p^{2s}}+\frac{\alpha^3-\alpha}{3p^{3s}}+\ldots\right)+\sum_{p>N}\log\!\left(1-\frac{\alpha^k}{p^{ks}}\right)\right\}.\label{lemma-functions-h-1}
\end{align}
Note that for $s=1+i\tau$ the two sums in \eqref{lemma-functions-h-1} are
$O\!\left(\,\sum_{p>N}\frac{1}{p^{2}}\right)$
and
$O\!\left(\,\sum_{p>N}\frac{1}{p^{k}}\right)$, respectively, as $N\to\infty$. Moreover, since $k\geq2$, the first sum dominates the second.
Finally, we use integration by parts:
\begin{align}
\sum_{p>N}\frac{1}{p^{2}}=\int_{N}^\infty\frac{1}{x^{2}}\,d\pi(x)=\left[\frac{\pi(x)}{x^2}\right]_{N}^\infty+2\int_{N}^\infty\frac{\pi(x)}{x^3}\,dx\ll\frac{1}{N\log N}.\nonumber
\end{align}
\end{proof}

\section{On the Dickman function}\label{section-Dickman}
The Dickman function $\rho$ is defined as the solution to the delay differential equation $u\rho'(u)+\rho(u-1)=0$ for $u>1$ and $\rho(u)=1$ for $0< u\leq1$. It appears naturally when counting smooth integers. Namely, if we define
\begin{align}
\Psi(x,y)=\sum_{\scriptsize{\begin{array}{c}
		n\leq x\\
		p| n\Rightarrow p\leq y
	\end{array}}}1,\nonumber
\end{align}
then Dickman \cite{Dickman1930} proved that $\Psi(x,y)\sim x\rho(u)$ as $x\to\infty $ when $y=x^{1/u}$ and $u\geq1$ is fixed. This asymptotic result has been refined and extended to values of $u$ that may depend on $x$. For instance, Hildebrand  \cite{Hildebrand1986} proved that 
\begin{align}
\Psi(x,y)=x\rho(u)\left(1+O_\varepsilon\!\left(\frac{\log(u+1)}{\log y}\right)\right), \hspace{.2cm}\mbox{where $y=x^{1/u}$}\label{Psi-asymptotic+ET}
\end{align}
for 
$1\leq u\leq \exp\{(\log y)^{3/5-\varepsilon}\}$. 
It is also known that \eqref{Psi-asymptotic+ET} holds uniformly for $1\leq u\leq y^{1/2-\varepsilon}$ 
if and only if the Riemann Hypothesis is true (Hildebrand \cite{Hildebrand1984}).
The only property of the Dickman function we shall use is that its Laplace transform $\hat\rho$ 
satisfies 
\begin{align}
s\hat\rho(s)=e^{-J(s)},\hspace{.5cm}J(s)=\int_{0}^{\infty}\frac{e^{-s-t}}{s+t}dt,\label{s-times-hat-rho-s}
\end{align}
and $s\mapsto J(s)$ is holomorphic on $\C\smallsetminus(-\infty,0]$, see e.g. section III.5.4 of \cite{Tenenbaum2015}. In particular, by studying the function
$x\mapsto\exp\!\left(-\displaystyle\int_{0}^{\infty}\frac{e^{-ix-t}}{ix+t}dt\right)$, 
it is not hard to show that there are two constants $C_1,C_2>0$ such that 
\begin{align}
\displaystyle\frac{C_1}{(1+x^2)^{\frac{1}{2}}}\leq\left|\hat{\rho}(ix)\right|\leq\displaystyle\frac{C_2}{(1+x^2)^{\frac{1}{2}}}\nonumber
\end{align}
for every $x\in\R$. 
Therefore the bound 
\begin{align}
\left|\hat\rho(ix)^\alpha\right|\ll\frac{1}{(1+x^2)^{\frac{\Re(\alpha)}{2}}}\label{decay-rho-hat-power-alpha}
\end{align}
holds for every $\alpha\in\C$, uniformly in $x$.

\section{Tenenbaum's Lemma for  partial zeta functions}\label{section-Tenenbaum-lemma}
The function $\zeta_N$ has been studied extensively.
For instance, we have 
 the following
\begin{lem}[Tenenbaum. See Lemma 9.1 on page 378 of \cite{Tenenbaum1995}]\label{Lemma-Tenenbaum}
Let $\varepsilon>0$. Then there exists $N_0=N_0(\varepsilon)\geq0$ such that, under the conditions
\begin{align}
N\geq N_0,\hspace{1cm}\sigma\geq1-(\log N)^{-(2/5)-\varepsilon},\hspace{1cm}|\tau|\leq L_\varepsilon(N):=\exp\left\{(\log N)^{3/5-\varepsilon}\right\},\label{conditions-Lemma-Tenenbaum}
\end{align} 
we have uniformly
\begin{align}
\zeta_N(s)=\zeta(s)(s-1)(\log N)\,\hat\rho\!\left((s-1)\log N\right)\left\{1+O\!\left(\frac{1}{L_\varepsilon(N)}\right)\right\},\label{Tenenbaum-formula-zeta_y(s)}
\end{align}
where $\sigma=\Re(s)$, $\tau=\Im(s)$, and $\hat \rho$
is the Laplace transform of the Dickman function. 
\end{lem}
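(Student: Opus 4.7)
The plan is to start from the factorisation $\zeta_N(s) = \zeta(s)\prod_{p>N}(1-p^{-s})$ and show that the tail product equals $(s-1)(\log N)\hat{\rho}((s-1)\log N)(1 + O(1/L_\varepsilon(N)))$ uniformly on the region \eqref{conditions-Lemma-Tenenbaum}. I would establish this first on the half-plane $\Re(s)>1$, where the infinite product converges absolutely, and then transfer the asymptotic to the full range of \eqref{conditions-Lemma-Tenenbaum} by analytic continuation combined with a standard convexity argument, since both sides are holomorphic there.

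Taking a logarithm of the tail, the quadratic expansion $\log(1-p^{-s}) = -p^{-s} + O(p^{-2\sigma})$ contributes $O(N^{1-2\sigma}/\log N)$, which is much smaller than $1/L_\varepsilon(N)$. The problem thus reduces to approximating $\sum_{p>N}p^{-s}$ by $J((s-1)\log N)$, since \eqref{s-times-hat-rho-s} will then convert the resulting logarithm into the target expression after exponentiation. I would apply partial summation together with the strongest prime number theorem in the zero-free region: $\pi(x) = \operatorname{li}(x) + O(x\exp(-c(\log x)^{3/5-\varepsilon'}))$ for some $\varepsilon' < \varepsilon$. The principal part $\int_N^\infty x^{-s}/\log x\,dx$ becomes, under the substitution $w=(s-1)\log x$, the ray integral $\int_{(s-1)\log N}^{\infty}e^{-w}/w\,dw$; the condition $|\tau|\le L_\varepsilon(N)$ keeps $\arg((s-1)\log N)$ in a bounded sector of the right half-plane, so this ray can be deformed to the horizontal contour defining $J((s-1)\log N)$ without crossing the pole at $w=0$.

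The main obstacle is to show that the PNT error contribution $\int_N^\infty x^{-\sigma-1}|E(x)|\,dx$ is $O(1/L_\varepsilon(N))$ uniformly in $s$. Via the substitution $y=\log x$ this reduces to analysing the exponent $(1-\sigma)y - cy^{3/5-\varepsilon'}$ for $y\ge\log N$: the bound $1-\sigma\le(\log N)^{-2/5-\varepsilon}$ combined with the freedom to choose $\varepsilon'<\varepsilon$ ensures that the maximum of this exponent, attained at some $y^*\asymp(\log N)^{(2/5+\varepsilon)/(2/5+\varepsilon')}$, is strictly smaller than $-(\log N)^{3/5-\varepsilon}$, yielding the required bound. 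The boundary terms produced by partial summation are handled in the same fashion. With this in place, \eqref{s-times-hat-rho-s} and multiplication by $\zeta(s)$ deliver the stated formula.
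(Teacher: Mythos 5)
The paper does not actually prove this lemma; it is quoted verbatim from Tenenbaum (Lemma 9.1, p.~378 of \cite{Tenenbaum1995}), so your attempt can only be measured against the textbook argument. In outline you have reproduced it: factor out the tail of the Euler product, expand the logarithm, approximate $\sum_{p>N}p^{-s}$ via partial summation with the Vinogradov--Korobov form of the prime number theorem, recognise the resulting ray integral as $J((s-1)\log N)$, and convert with \eqref{s-times-hat-rho-s}. On the line $\sigma=1$ (which is all the paper ever uses) this works.

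However, as a proof of the lemma in the full range \eqref{conditions-Lemma-Tenenbaum} there is a genuine gap, namely the strip $1-(\log N)^{-2/5-\varepsilon}\le\sigma<1$. There the tail sum $\sum_{p>N}p^{-s}$ and your error integral $\int_N^\infty x^{-\sigma-1}|E(x)|\,dx$ simply diverge: after the substitution $y=\log x$ the exponent $(1-\sigma)y-cy^{3/5-\varepsilon'}$ is a \emph{convex} function of $y$ whose critical point $y^*$ is a minimum, not a maximum; for $\sigma<1$ the exponent tends to $+\infty$ as $y\to\infty$, so no choice of $\varepsilon'<\varepsilon$ rescues the bound. Likewise your claim that $|\tau|\le L_\varepsilon(N)$ keeps $\arg((s-1)\log N)$ in a sector of the right half-plane fails when $\sigma<1$ and $\tau$ is small (for $\tau=0$ the point lies on the negative real axis), so the deformation of the ray integral onto the horizontal contour defining $J$ breaks down exactly where it is needed. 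Your proposed remedy --- ``analytic continuation combined with a standard convexity argument'' --- is not automatic either: analytic continuation does not propagate an $O(1/L_\varepsilon(N))$ error term, and to run a genuine two-constants/Phragm\'en--Lindel\"of interpolation on $\log$ of the ratio $\zeta_N(s)\big/\bigl[\zeta(s)(s-1)(\log N)\hat\rho((s-1)\log N)\bigr]$ you would first need the Vinogradov--Korobov zero-free region (to know $\zeta\neq0$ and the logarithm is single-valued in a slightly larger region, which is precisely where the exponents $2/5$ and $3/5$ come from), crude uniform upper and lower bounds for $\zeta_N$, $\zeta$ and $\hat\rho$ there, and an accounting of the loss in $\varepsilon$ incurred by the harmonic-measure exponent. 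None of this is indicated, so the passage from $\sigma\ge1$ to the full region --- the only genuinely delicate point of the lemma --- is missing.
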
 
Since $\displaystyle\lim_{s\to1}(s-1)\zeta(s)=1$, we see from Lemma \ref{Lemma-Tenenbaum} that $\zeta_N(s)$ behaves like $\log N$ ``near'' $s=1$. 
We aim to apply Lemma \ref{Lemma-Tenenbaum} to the partial zeta function $\zeta_N\!\left(1+\frac{ix}{\log N}\right)$  in the integral \eqref{integral-with-zeta_N}. Therefore $s=1+\frac{i x}{\log N}$ with $x\in\R$, and hence $\sigma=1$ and $\tau=\frac{x}{\log N}$. The second condition in \eqref{conditions-Lemma-Tenenbaum} is trivially satisfied, while the third condition reads as
\begin{align}
|x|\leq (\log N)\exp\{(\log N)^{3/5-\varepsilon}\}.\label{condition-bound-Tenenbaum-Lemma}
\end{align}

It is also worthwhile  mentioning that an improved version of Lemma \ref{Lemma-Tenenbaum} is available in \cite{Tenenbaum2015} (Lemma 5.16 on page 531), relying on finer analysis of the zero-free region for the zeta function.  However, since we apply it to the case of $\sigma=1$, the improved version is not needed in our analysis.

We will also need the following classical estimate of the size of the Riemann zeta function along the $\sigma=1$ line.
Namely, the fact (due to Vinogradov and Korobov) that 
there is a constant $A>0$ such that for every $|t|\geq3$ we have
\begin{align}
\left|\zeta(1+i t)\right|\leq A\left(\log|t|\right)^{2/3}, 
\label{Vinogradov-Korobov-estimate}
\end{align}
see, e.g., Lemma 8.28 in \cite{Iwaniec-Kowalski}. Ford proved in \cite{Ford2002} that we can take $A=76.2$ in 
\eqref{Vinogradov-Korobov-estimate}.
We can therefore get the bounds on the $\sigma=1$ line:
\begin{align}
1\ll|\zeta(1+it)(it)|\ll\begin{cases}1&\mbox{if $|t|\leq 3$};\\|t|(\log|t|)^{2/3}&\mbox{if $|t|\geq3$.}\end{cases}
\label{Vinogradov-Korobov-estimate-combined}
\end{align}

\section{Isolating the main term}\label{section-6}
We now split the integral in \eqref{integral-with-zeta_N} so that we can apply Lemma \ref{Lemma-Tenenbaum}. The main term in our sum \eqref{def-Sum-Omega-f-k-alpha-N} will come from considering $|x|\leq 3\log N$, which is allowed by \eqref{condition-bound-Tenenbaum-Lemma} provided $N$ is sufficiently large.
We  write $S_{\Omega,f} (\alpha,k;N)$ as
\begin{align}
\int_{-\infty}^\infty\hat f(x)\,\,\zeta_N\!\left(1+\frac{i x}{\log N}\right)^\alpha\,h_{\alpha,k,N}\!\left(1+\frac{i x}{\log N}\right)dx=\int_{|x|\leq 3\log N} +\int_{|x|> 3\log N}=:\mathcal I_1+\mathcal I_2.\label{integral=I1+I2}
\end{align}
Using Lemma \ref{Lemma-Tenenbaum} as discussed in Section \ref{section-Tenenbaum-lemma}, we obtain
\begin{align}
\mathcal I_1=(\log N)^\alpha 
\int_{|x|\leq 3\log N} \hat f(x)\,\zeta\!\left(1+\frac{ix}{\log N}\right)^\alpha \left(\frac{ix}{\log N}\right)^\alpha\hat\rho(ix)^\alpha \,\,h_{\alpha,k,N}\!\left(1+\frac{ix}{\log N}\right)dx+{E}_{1}
\label{I1-E1}
\end{align}
where, by the discussion in Section \ref{section-3},
\begin{align}
|{E}_{1}|&\ll \frac{(\log N)^{\Re(\alpha)}}{L_\varepsilon(N)} 
\int_{|x|\leq 3\log N}\left|\hat f(x)\,\zeta\!\left(1+\frac{ix}{\log N}\right)^\alpha \left(\frac{ix}{\log N}\right)^\alpha\hat\rho(ix)^\alpha \,\,h_{\alpha,k,N}\!\left(1+\frac{ix}{\log N}\right)\right|
dx
\nonumber\\
&\ll\frac{(\log N)^{\Re(\alpha)}}{L_\varepsilon(N)} 
\int_{|x|\leq 3\log N}\left|\hat f(x)\,\zeta\!\left(1+\frac{ix}{\log N}\right)^\alpha \left(\frac{ix}{\log N}\right)^\alpha\hat\rho(ix)^\alpha\right|\label{first-reduction-E1}
dx.
\end{align}
We will estimate $E_1$ and $\mathcal{I}_2$ in the Section \ref{section-7} and show that they are $o((\log N)^{\Re(\alpha)})$ as $N\to\infty$.
The dominant behaviour of the sum  $S_{\Omega,f} (\alpha,k;N)$  is therefore given by the first term in the right-hand-side of \eqref{I1-E1}:
\begin{align}
(\log N)^\alpha 
\int_{|x|\leq 3\log N} \hat f(x)\,\zeta\!\left(1+\frac{ix}{\log N}\right)^\alpha \left(\frac{ix}{\log N}\right)^\alpha\hat\rho(ix)^\alpha \,\,h_{\alpha,k,N}\!\left(1+\frac{ix}{\log N}\right)dx.\label{main-term}
\end{align}
Note that this gives a main term since the integral is $O(1)$. In fact, by \eqref{assumption-decay-hat-f}, \eqref{decay-rho-hat-power-alpha}, and  \eqref{Vinogradov-Korobov-estimate-combined}, we obtain the estimate
\begin{align}
&\left|\:\int_{|x|\leq 3\log N} \hat f(x)\,\zeta\!\left(1+\frac{ix}{\log N}\right)^\alpha \left(\frac{ix}{\log N}\right)^\alpha\hat\rho(ix)^\alpha \,\,h_{\alpha,k, N}\!\left(1+\frac{ix}{\log N}\right)dx\right|\nonumber\\
&\ll \int_{|x|\leq 3\log N}\frac{dx}{(1+x^2)^{\frac{\eta+\Re(\alpha)}{2}}}\ll 1,\label{integral-main-term-is-O1}
\end{align}
provided 
$\eta+\Re(\alpha)>1$. 

We can further simplify our main term \eqref{main-term} by replacing $h_{\alpha,k, N}(1+\tfrac{ix}{\log N})$ by $h_{\alpha,k}(1+\tfrac{ix}{\log N})$. In fact, by Lemma \ref{lemma-two-hs}, \eqref{main-term} equals
\begin{align}
(\log N)^\alpha\int_{|x|\leq 3\log N} \hat f(x)\,\zeta\!\left(1+\frac{ix}{\log N}\right)^\alpha \left(\frac{ix}{\log N}\right)^\alpha\hat\rho(ix)^\alpha \,\,h_{\alpha,k}\!\left(1+\frac{ix}{\log N}\right)dx+E_2,\label{MT+E2}
\end{align}
where 
\begin{align}
|E_2|\ll \frac{(\log N)^{\Re(\alpha)-1} }{N}. \label{estimate-E2}
\end{align}

One of the advantages of using Lemma \ref{Lemma-Tenenbaum} is its range of applicability \eqref{condition-bound-Tenenbaum-Lemma}, which we use when we consider the region $|x|\leq 3\log N$ in \eqref{main-term}. In comparison, the probabilistic approach in \cite{Cellarosi2013} only provides a main term where $(\log N)^\alpha$ is multiplied by an integral over a region of the form $|x|\leq R(N)$ with $R(N)=o(\log N)$ as $N\to\infty$; see also the Remark before Section 5.2 in \cite{Cellarosi2013}.

\section{Estimating the error terms}\label{section-7}
Let us estimate the first error term $E_1$. Using the same argument as in \eqref{integral-main-term-is-O1} for the integral in \eqref{first-reduction-E1} we obtain, for every $\varepsilon>0$, 
\begin{align}
|E_1|\ll(\log N)^{\Re(\alpha)}\exp\{-(\log N)^{3/5-\varepsilon}\}\label{final-estimate-E1}
\end{align}
for all sufficiently large $N$.
Let us now estimate the integral $\mathcal I_2$ from \eqref{integral=I1+I2}. 

If $\Re(\alpha)<0$, then we can use the lower bound $|\zeta_N(1+i\tau)|\gg1$ (uniform in $N\geq1$), \eqref{assumption-decay-hat-f}, and \eqref{decay-rho-hat-power-alpha} to obtain
\begin{align}
|\mathcal I_2|&\leq\int_{|x|>3\log N}\left|\hat f(x)\:\zeta_N\!\left(1+\frac{ix}{\log N}\right)^\alpha\: h_{\alpha,N}\!\left(1+\frac{ix}{\log N}\right)\right|dx\nonumber\\
&\ll\int_{|x|>3\log N}\frac{dx}{|x|^\eta }\ll(\log N)^{-\eta+1}.\label{estimate-I1-if-Realpha-negative}
\end{align}
On the other hand, if $\Re(\alpha)\geq0$, we use \eqref{decay-rho-hat-power-alpha}, Lemma \ref{Lemma-Tenenbaum}, our assumption \eqref{assumption-decay-hat-f}, and the upper bound in  \eqref{Vinogradov-Korobov-estimate-combined}. We get
\begin{align}
|\mathcal I_2|&\leq\int_{|x|>3\log N}\left|\hat f(x)\:\zeta_N\!\left(1+\frac{ix}{\log N}\right)^\alpha\: h_{\alpha,N}\!\left(1+\frac{ix}{\log N}\right)\right|dx\nonumber\\
&\ll(\log N)^{\Re(\alpha)}\int_{|x|>3\log N}\frac{1}{|x|^{\eta}} \left|\zeta\!\left(1+\frac{ix}{\log N}\right)^\alpha\left(\frac{ix}{\log N}\right)^\alpha\hat\rho(ix)^\alpha \right|\, dx \nonumber\\
&\ll(\log(N))^{\Re(\alpha)}\int_{|x|>3\log N}\frac{1}{|x|^{\eta+\Re(\alpha)}}\left|\frac{x}{\log N}\right|^{\Re(\alpha)} \left(\log\!\left|\frac{x}{\log N}\right|\right)^{\frac{2\Re(\alpha)}{3}}dx\nonumber\\
&\ll\int_{|x|>3\log N}\frac{(\log|x|)^{\frac{2\Re(\alpha)}{3}}}{|x|^{\eta}}\,dx\ll(\log(N))^{-\eta+1}(\log\log N )^{\frac{2\Re(\alpha)}{3}}.\label{estimate-I1-if-Realpha-positive}
\end{align}

\section{The main theorem}\label{section-8}
Combining \eqref{integral=I1+I2}, \eqref{I1-E1}, \eqref{integral-main-term-is-O1}, \eqref{MT+E2}, \eqref{estimate-E2}, 
\eqref{final-estimate-E1}, \eqref{estimate-I1-if-Realpha-negative}, and \eqref{estimate-I1-if-Realpha-positive},
we obtain the following
\begin{theorem}\label{theorem-2}
Fix $\alpha\in\mathbb C$ and an integer $k\geq2$. Suppose that $f$ satisfies \eqref{assumption-decay-hat-f} with 
\begin{align}
\eta>\max\{1,1-\Re(\alpha)\}.\label{assumption-eta-Realpha}
\end{align}
Then
\begin{align}
S_{\Omega,f}(\alpha,k;N)=C_f(\alpha,k;N)\,\log(N)^\alpha(1+E(N)),\nonumber
\end{align}
where 
\begin{align}
C_f(\alpha,k;N)&:=\int_{|x|\leq 3\log N} \hat f(x)\,\hat\rho(ix)^\alpha\,\zeta\!\left(1+\frac{ix}{\log N}\right)^\alpha \left(\frac{ix}{\log N}\right)^\alpha \,h_{\alpha,k}\!\left(1+\frac{ix}{\log N}\right)dx,\label{def-C}
\end{align}
$\hat\rho$ is the Laplace transform of the Dickman function, 
and $h_{\alpha,k}$ is defined in \eqref{h-alpha-k}. Moreover, as  $N\to\infty$,
 we have that $C(\alpha,f;N)=O(1)$ and 
\begin{align}
E(N)=\begin{cases}O\!\left((\log N)^{-\eta+1}\right)&\mbox{if $\Re(\alpha)<0$},\\O\!\left((\log N)^{-\eta+1}(\log\log N)^{\frac{2\Re(\alpha)}{3}}\right)&\mbox{if $\Re(\alpha)\geq0$}.\label{error-term-main-theorem}
\end{cases}
\end{align}
\end{theorem}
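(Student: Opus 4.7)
The plan is to assemble Theorem \ref{theorem-2} by combining the representations and bounds already established in Sections \ref{section-6} and \ref{section-7}; the statement is essentially a bookkeeping exercise that matches the stated error term $E(N)$ to the three error sources that appeared along the way. First I would recall the splitting $S_{\Omega,f}(\alpha,k;N) = \mathcal I_1 + \mathcal I_2$ from \eqref{integral=I1+I2}. On the range $|x| \leq 3\log N$ the constraint \eqref{condition-bound-Tenenbaum-Lemma} holds for $N$ large, so Lemma \ref{Lemma-Tenenbaum} applies uniformly and yields the decomposition \eqref{I1-E1}: a leading piece equal to $(\log N)^\alpha$ times the integral of
\[
\hat f(x)\,\zeta\!\left(1+\tfrac{ix}{\log N}\right)^{\!\alpha} \left(\tfrac{ix}{\log N}\right)^{\!\alpha} \hat\rho(ix)^\alpha\, h_{\alpha,k,N}\!\left(1+\tfrac{ix}{\log N}\right),
\]
plus an error $E_1$ controlled by $L_\varepsilon(N)^{-1}$ times the modulus of that same integrand. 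Next I would invoke Lemma \ref{lemma-two-hs} to replace $h_{\alpha,k,N}$ by $h_{\alpha,k}$ under the integral sign, incurring a further error $E_2 \ll (\log N)^{\Re(\alpha)-1}/N$ as in \eqref{MT+E2}--\eqref{estimate-E2}. At this point the leading part is exactly $C_f(\alpha,k;N)\,(\log N)^\alpha$ with $C_f$ as defined in \eqref{def-C}.

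The next task is the $O(1)$ bound for $C_f(\alpha,k;N)$. Combining the Dickman bound \eqref{decay-rho-hat-power-alpha}, the decay assumption \eqref{assumption-decay-hat-f} on $\hat f$, the estimate $|\zeta(1+it)(it)| \ll 1$ for $|t| \leq 3$ from \eqref{Vinogradov-Korobov-estimate-combined}, and the uniform boundedness of $h_{\alpha,k}$ for $\Re(s) > 1/2$, the integrand is majorized by $(1+x^2)^{-(\eta+\Re(\alpha))/2}$. The assumption $\eta > 1 - \Re(\alpha)$ from \eqref{assumption-eta-Realpha} is precisely what guarantees absolute convergence of this integral on $\mathbb R$, so $C_f = O(1)$ uniformly in $N$.

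The core step is then to show that each of $E_1$, $E_2$, and $\mathcal I_2$ is absorbed into the claimed $E(N)(\log N)^\alpha$ factor. The bound $|E_1| \ll (\log N)^{\Re(\alpha)}\exp\{-(\log N)^{3/5-\varepsilon}\}$ from \eqref{final-estimate-E1} follows by recycling the same integrand estimate as for $C_f$, and $E_2$ is manifestly of lower order. For the tail $\mathcal I_2$ I would split on the sign of $\Re(\alpha)$: when $\Re(\alpha) < 0$ the uniform lower bound $|\zeta_N(1+i\tau)| \gg 1$ yields $|\mathcal I_2| \ll (\log N)^{-\eta+1}$ directly; when $\Re(\alpha) \geq 0$ the zeta factor must instead be controlled from above by Lemma \ref{Lemma-Tenenbaum} combined with Vinogradov--Korobov \eqref{Vinogradov-Korobov-estimate-combined}, which produces the extra $(\log\log N)^{2\Re(\alpha)/3}$ factor appearing in \eqref{error-term-main-theorem}. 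The second condition $\eta > 1$ in \eqref{assumption-eta-Realpha} is exactly what makes the resulting tail integral converge. Dividing the cumulative estimate by $C_f(\alpha,k;N)(\log N)^\alpha$ puts the remainder in the multiplicative form $1 + E(N)$ required by the theorem.

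The main obstacle, I expect, is the tail $\mathcal I_2$ in the regime $\Re(\alpha) \geq 0$: there the convenient lower bound on $|\zeta_N|^\alpha$ is unavailable, and the estimate must route simultaneously through Lemma \ref{Lemma-Tenenbaum} and the Vinogradov--Korobov bound. The decay $(1+x^2)^{-\eta/2}$ of $\hat f$ has to absorb not only the bounded factor $(ix/\log N)^\alpha \hat\rho(ix)^\alpha$ but also the modest $(\log|t|)^{2\Re(\alpha)/3}$ growth of $\zeta(1+it)^\alpha$, which is exactly where both components of the hypothesis \eqref{assumption-eta-Realpha} are genuinely used.
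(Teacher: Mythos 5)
Your proposal is correct and follows essentially the same route as the paper: the splitting $\mathcal I_1+\mathcal I_2$ of \eqref{integral=I1+I2}, Lemma \ref{Lemma-Tenenbaum} on $|x|\leq 3\log N$ giving \eqref{I1-E1}, the replacement of $h_{\alpha,k,N}$ by $h_{\alpha,k}$ via Lemma \ref{lemma-two-hs} with error \eqref{estimate-E2}, the $O(1)$ bound \eqref{integral-main-term-is-O1} for $C_f$, and the tail estimates \eqref{estimate-I1-if-Realpha-negative} and \eqref{estimate-I1-if-Realpha-positive} split on the sign of $\Re(\alpha)$. This is exactly the combination of displayed estimates the paper cites in Section \ref{section-8} to deduce Theorem \ref{theorem-2}.
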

Theorem \ref{theorem-1} follows immediately from Theorem \ref{theorem-2} if we assume that $f$ is of Schwartz class,  since $\eta$ can be taken arbitrarily large and hence the error term \eqref{error-term-main-theorem} is $O_\theta(\log^{-\theta}N)$ for every $\theta>0$.

Recalling \eqref{s-times-hat-rho-s}, we observe  that the function  $$x\mapsto F_{f,\alpha}(x):=\hat f(x)\hat \rho(ix)^\alpha=\frac{\hat f(x)}{(ix)^\alpha}\exp\!\left(-\alpha\displaystyle\int_{0}^{\infty}\frac{e^{-ix-t}}{ix+t}dt\right)$$ can be interpreted as the Fourier transform of the convolution of $f$ with the $\alpha$-convolution of the Dickman function $\rho$. Although this is a priori only a distribution in the sense of Schwartz, the assumption \eqref{assumption-eta-Realpha} ensures that it is actually a function.
The change of variables $\tau=\frac{x}{\log N}$ yields
\begin{align}
C_{f}(\alpha,k,N)=\log N\int_{-3}^3 F_{f,\alpha}(\tau\log N) \zeta(1+i\tau)^\alpha(i\tau)^\alpha h_{\alpha,k}(1+i\tau)\,d\tau.\nonumber
\end{align}
In applications, it is therefore important to understand the function $F_{f,\alpha}$ in order to find the asymptotic value of $C_f(\alpha,k,N)$ as $N\to\infty$.

\section*{Acknowledgments}
Both authors are supported by NSERC Discovery Grants. The first author would like to thank Brad Rodgers for fruitful discussions on the bounds of the zeta function on the 1-line.

\bibliographystyle{plain}
\bibliography{smooth-sum-bibliography}

\end{document}